\begin{document}

\newtheorem{theorem}{Theorem}[section]
\newtheorem{proposition}[theorem]{Proposition}
\newtheorem{definition}[theorem]{Definition}
\newtheorem{corollary}[theorem]{Corollary}
\newtheorem{lemma}[theorem]{Lemma}
\newtheorem{question}[theorem]{Question}

\theoremstyle{definition}
\newtheorem{remark}{Remark}
\newtheorem{example}{Example}

\newcommand{\pr}[1]{\left\langle #1 \right\rangle}
\newcommand{\mR}{\mathcal{R}}
\newcommand{\RR}{\mathbb{R}}
\newcommand{\QQ}{\mathbb{Q}}
\newcommand{\mA}{\mathcal{A}}
\newcommand{\mE}{\mathcal{E}}
\newcommand{\mV}{\mathcal{V}}
\newcommand{\mF}{\mathcal{F}}
\newcommand{\mU}{\mathcal{U}}
\newcommand{\mP}{\mathcal{P}}
\newcommand{\mT}{\mathcal{T}}
\newcommand{\mB}{\mathcal{B}}
\newcommand{\mK}{\mathcal{K}}
\newcommand{\C}{\mathrm{C}}
\newcommand{\mC}{\mathcal{C}}
\newcommand{\mO}{\mathcal{O}}
\newcommand{\mM}{\mathcal{M}}

\newcommand{\D}{\mathrm{D}}
\newcommand{\0}{\mathrm{o}}
\newcommand{\OD}{\mathrm{OD}}
\newcommand{\Do}{\D_\mathrm{o}}
\newcommand{\sone}{\mathsf{S}_1}
\newcommand{\gone}{\mathsf{G}_1}
\newcommand{\sfin}{\mathsf{S}_\mathrm{fin}}
\newcommand{\Em}{\longrightarrow}
\newcommand{\menos}{{\setminus}}
\newcommand{\w}{{\omega}}

\title{Productively countably tight spaces of the form $C_k(X)$}
\author[L. F. Aurichi]{Leandro F. Aurichi$^1$}
\thanks{$^1$ Supported by FAPESP (2013/05469-7)}
\address{Instituto de Ci\^encias Matem\'aticas e de Computa\c c\~ao,
Universidade de S\~ao Paulo, Caixa Postal 668,
S\~ao Carlos, SP, 13560-970, Brazil}
\email{aurichi@icmc.usp.br}

\author[R. M. Mezabarba]{Renan M. Mezabarba$^2$}
\thanks{$^2$ Supported by CAPES (DS-7346753/M)}
\address{Instituto de Ci\^encias Matem\'aticas e de Computa\c c\~ao,
Universidade de S\~ao Paulo, Caixa Postal 668,
S\~ao Carlos, SP, 13560-970, Brazil}
\email{rmmeza@icmc.usp.br}

\keywords{topological games, selection principles, productively countably tightness, Alster spaces, $G_\delta$-topology, bornology}

\subjclass[2010]{Primary 54D20; Secondary 54G99, 54A10}

\maketitle

\begin{abstract}
Some results in $C_k$-theory are obtained with the use of bornologies. We investigate under which conditions the space of the continuous real functions with the compact-open topology is a productively countably tight space, which yields some applications on Alster spaces.

\end{abstract}

\section{Introduction}

Along this work, $\kappa$ and $\lambda$ denote infinite cardinals. By $C_p(X)$ we mean the space of the continuous real functions on $X$ with the topology of the pointwise convergence, while $C_k(X)$ denotes the space of the continuous real functions on $X$ with the compact-open topology.

Recall that for a topological space $X$ and a point $x\in X$, the {\bf tightness of $X$ at $x$}, denoted by $t(x,X)$, is the least cardinal $\kappa$ with the property that if $x\in \overline{A}$ for any $A\subset X$, then there exists a $B\in[A]^{\leq\kappa}$ such that $x\in\overline{B}$. The {\bf tightness of the space $X$} is the supremum of all cardinals $t(x,X)$ for $x\in X$, that we denote by $t(X)$.

Following Arhangel'skii in \cite{Arhan1}, we say that a topological space $X$ is {\bf productively $\kappa$-tight at $x\in X$} if, for any space $Y$ with $t(Y)\leq \kappa$, one has $t((x,y),X\times Y)\leq \kappa$ for any $y\in Y$, and we denote this by $\kappa\in Sp(x,X)$. Naturally, the space $X$ is {\bf productively $\kappa$-tight} if $\kappa\in Sp(x,X)$ for all $x$ in $X$, and we write $\kappa\in Sp(X)$ to denote that.

In \cite{Arhan1}, Arhangel'skii gave an internal characterization to the productively $\kappa$-tightness property of a Tychonoff space $X$ by using the concept of $\kappa$-singular families. A family $\mP$ of collections of subsets of $X$ is {\bf $\kappa$-singular} at $x\in X$ if the following holds:
\begin{enumerate}[(a)]
\item for each $\xi\in \mP$ there exists $A\in\xi$ such that $|A|\leq \kappa$;
\item each $\xi\in\mP$ is centered (has the finite intersection property);
\item for any $O\subset X$ open with $x\in O$ there is some $\xi \in \mP$ and $A\in \xi$ such that $A\subset O$;
\item for any $\mE\subset \mP$ with $|\mE|\leq \kappa$ it is possible to choose $A(\xi)\in \xi$ for each $\xi\in \mE$ such that $x\not\in \overline{\bigcup_{\xi\in\mE}A(\xi)}$.
\end{enumerate}

Note that in the presence of conditions $(b),(c)$ and $(d)$, $(a)$ is equivalent to ask condition $(a)'$: every member of any $\xi\in\mP$ is a subset of $X$ with cardinality at most $\kappa$. Then, we say that $X$ is $\kappa$-singular at $x\in X$ if there exists a family $\mP$ $\kappa$-singular at $x$. With this terminology we may state Arhangel'skii characterization for $Sp(x,X)$:

\begin{theorem}[Arhangel'skii, \cite{Arhan1}, Theorem 3.5]\label{ArhanSp}

For a Tychonoff space $X$ and $x\in X$, $\kappa\in Sp(x,X)$ if, and only if, $X$ is not $\kappa$-singular at $x$.

\end{theorem}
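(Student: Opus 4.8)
The plan is to prove the equivalence in its contrapositive form, showing that $\kappa\notin Sp(x,X)$ holds exactly when a $\kappa$-singular family at $x$ exists. Throughout, the data witnessing $\kappa\notin Sp(x,X)$ is a triple $(Y,y,W)$ with $t(Y)\le\kappa$, a point $y\in Y$, and a set $W\subseteq X\times Y$ such that $(x,y)\in\overline{W}$ while $(x,y)\notin\overline{B}$ for every $B\in[W]^{\le\kappa}$; the data witnessing $\kappa$-singularity is a family $\mP$ as in (a)--(d). The two directions amount to converting one kind of data into the other, and the guiding heuristic is that condition (d) is the exact trace, in the $X$-coordinate, of the tightness bound $t(Y)\le\kappa$.

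For the direction ``$\kappa$-singular $\Rightarrow\kappa\notin Sp(x,X)$'', I would manufacture a test space from $\mP$. Take $Y=\mP\cup\{*\}$ with each $\xi\in\mP$ isolated, and set $W=\bigcup_{\xi\in\mP}(\bigcup\xi)\times\{\xi\}$, so that the fiber of $W$ over $\xi$ collects the members of $\xi$. Condition (c) then forces $(x,*)\in\overline{W}$: every basic neighbourhood of $x$ in $X$ swallows some member $A\in\xi$, producing points of $W$ arbitrarily close to $(x,*)$ in the first coordinate, while (b) and (a) keep the fibers coherent and supply the small members. The decisive point is to topologize $*$ so that, simultaneously, $t(Y)\le\kappa$ holds and every $B\in[W]^{\le\kappa}$ is pushed off $(x,*)$: given such a $B$, its $Y$-projection selects a subfamily $\mE\subseteq\mP$ of size $\le\kappa$, and (d) hands back a choice $A(\xi)\in\xi$ with $x\notin\overline{\bigcup_{\xi\in\mE}A(\xi)}$, i.e.\ an open $O\ni x$ separating $x$ from the selected members; the neighbourhoods of $*$ must be arranged so that this $O$, together with an appropriate neighbourhood of $*$, excludes $B$. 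I expect the verification that this topology has tightness at most $\kappa$ (rather than the naive co-$\le\kappa$ topology, whose tightness is $\kappa^{+}$) to be the subtle part, and it is here that (d), not mere cardinality, must be invoked.

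For the converse ``$\kappa\notin Sp(x,X)\Rightarrow\kappa$-singular'', I would read a family $\mP$ off the witness $(Y,y,W)$. For an open $V\ni y$ put $A_V=\{a\in X:(a,b)\in W\text{ for some }b\in V\}$; since $(x,y)\in\overline{W}$, one has $x\in\overline{A_V}$ for every such $V$, and the sets $A_V$ form a centered (indeed downward directed) collection, giving (b) and, after intersecting with neighbourhoods of $x$, condition (c). To obtain the small members demanded by (a) I would use $t(Y)\le\kappa$: the projection $\pi_Y(W)$ accumulates at $y$, so there are $\le\kappa$-sized subsets of it clustering at $y$, and the corresponding traces of $W$ provide members of cardinality $\le\kappa$. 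Finally, (d) is precisely the hypothesis that no $B\in[W]^{\le\kappa}$ clusters at $(x,y)$: a $\le\kappa$-sized subfamily of $\mP$ assembles into such a $B$, and the resulting separation of $(x,y)$ from $\overline{B}$ is transported, through the $Y$-coordinate bookkeeping, into the separation of $x$ from the union of the chosen members in $X$.

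The main obstacle in both directions is the same gap between \emph{separation in the product} and \emph{separation in the single coordinate} $X$: the hypothesis on $W$ only says $(x,y)\notin\overline{B}$, i.e.\ some $O\times V$ misses $B$, whereas (d) demands an honest $X$-statement $x\notin\overline{\bigcup A(\xi)}$. Bridging this requires using $t(Y)\le\kappa$ to confine the $Y$-coordinates of the relevant part of $W$ inside a single neighbourhood of $y$, so that the product separation collapses to a separation in $X$; conversely, in the test-space construction, one must encode enough of $Y$ into the indexing of $\mP$ that the coordinate-wise selector of (d) can be promoted back to a genuine product neighbourhood avoiding $B$. Once this translation is set up carefully, together with the tightness computation for the constructed $Y$, conditions (a)--(d) and the characterization fall out by bookkeeping.
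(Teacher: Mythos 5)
The paper does not actually prove this statement: it is imported verbatim from Arhangel'skii \cite{Arhan1}, whose own argument routes through a third equivalent condition (that $t(x,bX)\leq\kappa$ for Hausdorff compactifications $bX$ of $X$), a detour your plan deliberately avoids. So your outline must stand on its own, and as written it does not: in each direction the step you yourself flag as ``the subtle part'' or as something to be ``set up carefully'' is precisely the content of the theorem, and it is left undone. In the forward direction you never define the topology of $Y=\mP\cup\{*\}$ at $*$. The three demands you place on it pull against one another: to get $(x,*)\in\overline{W}$, every neighbourhood of $*$ must retain, for each open $O\ni x$, some $\xi$ with a member inside $O$; to exclude a given $B\in[W]^{\leq\kappa}$, some neighbourhood of $*$ must avoid enough of the $\leq\kappa$ many indices carrying $B$; and to keep $t(*,Y)\leq\kappa$, every subset of $\mP$ clustering at $*$ must contain a $\leq\kappa$-sized subset that still clusters. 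Reconciling these is the whole theorem, and ``the neighbourhoods of $*$ must be arranged so that\dots'' names the problem rather than solving it. There is also a concrete mismatch between your $W=\bigcup_{\xi\in\mP}\left(\bigcup\xi\right)\times\{\xi\}$ and condition (d): a set $B\in[W]^{\leq\kappa}$ may meet many distinct members of the same $\xi$, while (d) hands you only one distinguished member $A(\xi)$ per $\xi$, so the open set $O\ni x$ it produces does not separate $x$ from $\pi_X(B)$; the only way to kill such a $B$ is to avoid its index set outright, which is exactly the co-$\leq\kappa$ topology you correctly reject. This suggests the fibers of $W$ (and the indexing of $Y$) need to be chosen differently, not merely the topology at $*$.

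The converse has a structural gap as well. The collection $\{A_V : V\ni y\ \text{open}\}$ with $A_V=\pi_X(W\cap(X\times V))$ cannot serve as a single centered family $\xi\in\mP$: every set of the form $A_V\cap O$ with $O\ni x$ open still has $x$ in its closure, so condition (d) applied to $\mE=\{\xi\}$ would fail immediately; and condition (c) requires members contained in arbitrarily small neighbourhoods of $x$, which the $A_V$ are not. The natural repair is to index the centered families by neighbourhoods of $x$, taking $\xi_O=\{\pi_X(W\cap(O\times V)):V\ni y\ \text{open}\}$ together with a $\leq\kappa$-sized member extracted via $t(y,Y)\leq\kappa$ (choosing one point of $W\cap(O\times Y)$ over each element of a small set clustering at $y$); this delivers (a), (b), (c). But the verification of (d) then runs squarely into the product-versus-coordinate obstacle you describe: $(x,y)\notin\overline{B}$ yields a box $O\times V$ missing $B$, yet the part of $B$ lying over $Y\setminus V$ may still project into every neighbourhood of $x$. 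You correctly identify this as the crux and assert that $t(Y)\leq\kappa$ ``confines the $Y$-coordinates'' to bridge it, but no mechanism for that confinement is given, and it is not routine -- it is the reason Arhangel'skii works inside a compactification, where the relevant closures can be controlled. Until the topology on $Y$ in one direction and the selector argument for (d) in the other are actually exhibited, this is a plan for a proof, not a proof.
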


Actually, Arhangel'skii has obtained a third equivalence in the above result, related with the tightness of $x$ in any Hausdorff compactification of $X$. Using this other characterization, Uspenskii showed the following:

\begin{theorem}[Uspenskii, \cite{Uspenskii}, Theorem 1]\label{Uspenskii} For a Tychonoff space $X$, $\kappa\in Sp(C_p(X))$ if, and only if, every open covering $\mU$ for $X_\kappa$ has a subcovering $\mU'$ with cardinality less than or equal to $\kappa$, where $X_{\kappa}$ is the topological space obtained by declaring open the $G_{\kappa}$-sets of $X$.
\end{theorem}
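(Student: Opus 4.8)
The plan is to reduce the statement to a single point and then convert Arhangel'skii's combinatorial criterion (Theorem~\ref{ArhanSp}) into a covering condition on $X_\kappa$. Since $C_p(X)$ is a topological vector space, and in particular a homogeneous space, it is productively $\kappa$-tight at every point as soon as it is productively $\kappa$-tight at the null function $\mathbf{0}$; hence $\kappa\in Sp(C_p(X))$ if and only if $\kappa\in Sp(\mathbf{0},C_p(X))$, which by Theorem~\ref{ArhanSp} is equivalent to $C_p(X)$ \emph{not} being $\kappa$-singular at $\mathbf{0}$. Passing to contrapositives, it suffices to establish the dictionary: \emph{$C_p(X)$ is $\kappa$-singular at $\mathbf{0}$ if and only if $X_\kappa$ carries an open cover $\mU$ admitting no subcover of cardinality $\leq\kappa$.} (Uspenskii's original argument instead exploits Arhangel'skii's third equivalence, phrased through the tightness of $\mathbf{0}$ in a Hausdorff compactification of $C_p(X)$ sitting inside $[-\infty,+\infty]^X$; I prefer the singular-families route since Theorem~\ref{ArhanSp} is already at hand.)

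The engine of the dictionary is the elementary fact that, on the $C_p$-side, membership of $\mathbf{0}$ in a closure is controlled by \emph{finite} subsets of $X$: a basic neighbourhood of $\mathbf{0}$ has the form $\{f:|f(x)|<\epsilon \text{ for } x\in F\}$ with $F$ finite, so for a set $A$ of functions one has $\mathbf{0}\notin\overline{A}$ exactly when some finite $F$ and some $\epsilon>0$ force $\max_{x\in F}|f(x)|\geq\epsilon$ for every $f\in A$. On the $X_\kappa$-side, since $X$ is Tychonoff its cozero sets form a base, so the sets $\bigcap_{\alpha<\kappa}\mathrm{coz}(g_\alpha)$ with $g_\alpha\in C_p(X)$ form a base for $X_\kappa$, and refining any cover to such basic sets neither creates nor destroys a subcover of size $\leq\kappa$. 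I would then manufacture, from a cover $\mU$ with no small subcover, a family $\mP$ whose members package the coordinate functions $g_\alpha$ (and their complementary data) so that (c) follows from $\mU$ covering $X$, while $(a)'$ and (b) are arranged by hand, each selected set of functions having size $\leq\kappa$ and the constraints within a given $\xi$ being mutually compatible. This is precisely where the $G_\kappa$-modification enters: condition (d) quantifies over subfamilies $\mE\subseteq\mP$ of size $\leq\kappa$, and a selection $A(\xi)\in\xi$ over such an $\mE$ assembles $\kappa$-many cozero constraints into a single $G_\kappa$-set; the absence of a subcover of size $\leq\kappa$ produces a point outside the union of these $G_\kappa$-sets, hence a finite $F$ (one such point already suffices) and an $\epsilon$ witnessing $\mathbf{0}\notin\overline{\bigcup_{\xi\in\mE}A(\xi)}$. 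The converse runs the same correspondence backwards: a $\kappa$-singular family yields, through its chosen sets and their cozero data, a $G_\kappa$-cover of $X$ by (c), and (d) is exactly what forbids any $\leq\kappa$ of the resulting $G_\kappa$-sets from covering.

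The main obstacle, and the step I would treat with the most care, is pinning down this correspondence so that ``no subcover of size $\leq\kappa$'' and ``$\mathbf{0}\notin\overline{\bigcup A(\xi)}$'' become \emph{literally} the same assertion. The difficulty is a genuine mismatch of scales: the closure test at $\mathbf{0}$ is governed by finite sets of points together with a tolerance $\epsilon$, whereas the cover lives in the $G_\kappa$-topology built from $\kappa$-fold intersections. Reconciling them forces a delicate encoding in which the functions must simultaneously cluster at $\mathbf{0}$ (to supply, for every basic neighbourhood, a set lying inside it, i.e.\ condition (c)) and yet, after the right selection over few $\xi$'s, be commonly large on one finite set (condition (d)); the natural complementary encoding that makes ``$x\notin W$'' read as ``some coordinate function is large at $x$'' must be reconciled with the smallness demanded by (c), all while keeping every member of $\mP$ of cardinality $\leq\kappa$ (condition $(a)'$). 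Once this encoding is fixed, homogeneity, the reduction to basic covers, and the verification of (b) and (c) are routine.
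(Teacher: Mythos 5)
Your overall route is the same one the paper takes for its generalization (Theorem \ref{Uspenskii1}, of which the present statement is the case $\mB=\mF$): reduce to the point $\0$ by homogeneity, invoke Theorem \ref{ArhanSp}, and translate $\kappa$-singularity at $\0$ into a covering property of $X_\kappa$, thereby avoiding Uspenskii's compactification argument. So the architecture is right. But the proposal stops precisely at the step you yourself flag as the main obstacle --- the actual construction of the $\kappa$-singular family from a bad cover --- and the one concrete claim you do commit to there is wrong as stated: condition (c) does \emph{not} ``follow from $\mU$ covering $X$''. A basic neighbourhood of $\0$ in $C_p(X)$ is $\langle F,\varepsilon\rangle[\0]$ for a \emph{finite} $F\subset X$, so to place an entire member $A\in\xi$ inside it you need all functions of $A$ to be simultaneously small on $F$, and in the natural encoding (functions vanishing on $F$ and equal to $1$ off a set from the cover) this forces $F$ to lie inside a \emph{single} member of $\mU$. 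A plain cover only guarantees this for singletons. You must first replace $\mU$ by its finite unions --- harmless, since a finite union of $G_\kappa$-sets is again a $G_\kappa$-set and the enlarged family still admits no subcover of size $\leq\kappa$ --- so that you are really working with an $\omega$-cover, i.e.\ an $\mF$-cover; this is exactly why the paper states Theorem \ref{Uspenskii1} in terms of $\mB$-coverings rather than plain covers of $X_\kappa$. The same mismatch bites in the converse direction: from a $\kappa$-singular family the paper manufactures $\omega$-covers $\frak{U}_n$ of $X$ by $G_\kappa$-sets and needs $\omega$-\emph{sub}covers of size $\leq\kappa$ (a finite $B$ must land in a single member to defeat (d)); extracting these from the hypothesis $L(X_\kappa)\leq\kappa$ uses that this property passes to finite powers of $X_\kappa$ (true, since $X_\kappa$ is a $P_{\kappa^+}$-space), an ingredient your plan omits.

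For the record, the encoding you are circling around is the paper's: for each finite $F$ and each $G=\bigcap\mU(G)$ in the ($\omega$-)cover with $F\subset G$, where $\mU(G)$ is a family of at most $\kappa$ open sets closed under finite intersections, fix $f_{F,U}\in C(X)$ with $f_{F,U}\upharpoonright F\equiv 0$ and $f_{F,U}\upharpoonright (X\menos U)\equiv 1$, and put $A_{F,G,U}=\{f_{F,V}:V\in\mU(G),\ V\subset U\}$ and $\xi_{F,G}=\{A_{F,G,U}:U\in\mU(G)\}$. Closure under finite intersections gives (b), conditions $(a)'$ and (c) are immediate from the $\omega$-cover property, and for (d) any $\kappa$ of the $G$'s fail to cover, so a single point $x$ outside their union yields, for each $\alpha$, some $U_\alpha\in\mU(G_\alpha)$ with $x\notin U_\alpha$, whence every function of $A_{F_\alpha,G_\alpha,U_\alpha}$ equals $1$ at $x$ and $\langle\{x\},\frac{1}{2}\rangle[\0]$ misses the union. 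Your observation that one point suffices is correct --- but only for (d); it is (c) where the finite sets, rather than single points, must do the work, and that is the gap to close.
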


The main result of this work is a generalization of the last result, by using the concepts of bornologies and avoiding the compactification argument of Uspenskii.

\section{Remarks about bornologies}

A {\bf bornology} $\mB$ on a topological space $X$ is an ideal of subsets of $X$ that covers the space. By a (compact) base $\mB'$ for a bornology $\mB$ on $X$, we mean a subset of $\mB$ that is cofinal with respect to inclusion (such that all its elements are compact).

For a topological space $X$ and a bornology $\mB$ on $X$ with compact base, we call the {\bf topology of uniform convergence on} $\mB$, denoted by $\mathcal{T}_{\mB}$, as the topology on $C(X)$ having as a neighborhood base at each $f\in C(X)$ the sets of the form $$\langle B,\varepsilon\rangle[f]:=\{g\in C(X):\forall x\in B(|f(x)-g(x)|<\varepsilon)\},$$
for $B\in\mB$ and $\varepsilon>0$. By $C_{\mB}(X)$ we mean the space $(C(X),\mathcal{T}_{\mB})$.

\begin{remark}
In fact, since $\mT_{\mB}$ is obtained from a separating uniformity over $C(X)$, it follows that $C_{\mB}(X)$ is a Tychonoff space (see McCoy and Ntantu \cite{McCoy}).
\end{remark}

We say that $\mC$ is a $\mB$-covering for $X$ if for every $B\in\mB$ there is a $C\in\mC$ such that $B\subset C$. Following the notation of Caserta \emph{et al.} (\cite{Caserta2}), we denote by $\mathcal{O}_{\mB}$ the collection of all open $\mB$-coverings for $X$. When $\mU\in\mO_{\mB}$ is such that $X\not\in\mU$, we say that $\mathcal{U}$ is nontrivial; note that if $\mU\in\mO_{\mB}$ is nontrivial, then $\mathcal{U}\setminus F$ is an open $\mB$-covering for $X$ for any $F\in[\mathcal{U}]^{<\omega}$.

Also, we denote by $l_{\mB}(X)$ the {\bf $\mB$-Lindel\"of degree of $X$}, that is the smallest transfinite cardinal $\kappa$ such that for every open $\mB$-covering for $X$ there exists a $\mB$-subcovering $\mU'\subset \mU$ with $|\mU'|\leq \kappa$.

\begin{example}
The main examples of bornologies with compact base on a topological space $X$ are the bornologies $\mathcal{F}=[X]^{<\omega}$ and $\frak{K}=\{A\subset X:\exists K\subset X$ compact and $A\subset K\}$ $-$ if $X$ is a Hausdorff space, then $\frak{K}=\{A\subset X:\overline{A}$ is compact$\}$. For $\mB=\mathcal{F}$, one has $C_{\mF}(X)=C_p(X)$ and the $\mF$-coverings are usually called by $\omega$-coverings of $X$; we denote by $\Omega$ the collection of all open $\omega$-coverings. Also, if $X$ is Hausdorff, then for $\mB=\frak{K}$ it follows that $C_{\frak{K}}(X)=C_{k}(X)$ and the $\frak{K}$-coverings for $X$ are the so called $K$-coverings for $X$; the family of all $K$-coverings will be denoted by $\mK$.
\end{example}

\section{Some results with bornologies}

Let $\alpha\geq \omega$ be an ordinal. Recall that the game $\gone^{\alpha}(\mA,\mC)$ denotes the two players game played as follows: for every inning $\gamma<\alpha$, player I chooses an element $A_\gamma\in\mA$, and then player II picks an $a_\gamma\in A_\gamma$; player II wins if $\{a_\gamma:\gamma<\alpha\}\in \mC$; we denote by $\gone(\mA,\mC)$ when $\alpha=\omega$. In the following, we should use the families $\mO_{\mB}$ as well as the family $\Omega_x:=\{A\subset X\setminus\{x\}:x\in \overline{A}\}$. We denote by $\0$ the zero function and the open interval $\left(-\frac{1}{n+1},\frac{1}{n+1}\right)$ by $I_n$, for all $n\in\omega$.

The next Lemma enable us to translate some closure properties of $C_{\mB}(X)$ as ``$\mB$-covering'' properties of $X$, and \emph{vice-versa}.

\begin{lemma}
Let $X$ be a Tychonoff space and let $\mB$ be a bornology with compact base on $X$.
\begin{enumerate}[(a)]
\item If $\mU\in\mO_{\mB}$ is nontrivial, then $A=\{f\in C_{\mB}(X):\exists U\in\mU(f\upharpoonright X\setminus U\equiv 1)\}\in\Omega_{\0}$.
\item Let $A\subset C_{\mB}(X)$, $n\in\omega$ and let $\mU=\{f^{-1}[I_n]:f\in A\}$. If $\0\in\overline{A}$, then $\mU\in\mO_{\mB}$ $($and possibly $X\in\mU$$)$.
\end{enumerate}
\end{lemma}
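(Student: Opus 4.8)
The plan is to unwind the definition of the topology $\mT_{\mB}$ in both directions, using the compact base of $\mB$ to invoke the separation properties of the Tychonoff space $X$.

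For \emph{(a)} I would first check that $\0\notin A$: since $\mU$ is nontrivial, $X\notin\mU$, so every $U\in\mU$ has $X\setminus U\neq\emptyset$; hence any $f\in A$ takes the value $1$ somewhere and is not the zero function, giving $A\subset C_{\mB}(X)\setminus\{\0\}$. The substance is showing $\0\in\overline{A}$. I would fix a basic neighborhood $\pr{B,\varepsilon}[\0]$ of $\0$, with $B\in\mB$ and $\varepsilon>0$, and produce some $f\in A$ lying in it. Using the compact base, choose a compact $K\in\mB$ with $B\subset K$; since $\mU$ is a $\mB$-covering, pick $U\in\mU$ with $K\subset U$. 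Then $K$ is compact and $X\setminus U$ is closed and disjoint from $K$.

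The key step---and the one that actually uses the hypotheses on $X$ and $\mB$---is the separation: in a Tychonoff space a compact set and a disjoint closed set are completely separated, so there is a continuous $f\colon X\to[0,1]$ with $f\upharpoonright K\equiv 0$ and $f\upharpoonright(X\setminus U)\equiv 1$. Such an $f$ belongs to $A$ (as $f\upharpoonright X\setminus U\equiv 1$) and lies in $\pr{B,\varepsilon}[\0]$ (as $f\equiv 0$ on $B\subset K$, whence $|f(x)|=0<\varepsilon$ there). Since the neighborhood was arbitrary, $\0\in\overline{A}$, which finishes \emph{(a)}.

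For \emph{(b)} the argument is a direct dualization requiring no separation. Each $f^{-1}[I_n]$ is open by continuity of $f$, so $\mU$ consists of open sets, and it remains to verify the $\mB$-covering condition. Given $B\in\mB$, I apply $\0\in\overline{A}$ to the neighborhood $\pr{B,\frac{1}{n+1}}[\0]$ to get $f\in A$ with $|f(x)|<\frac{1}{n+1}$ for all $x\in B$; equivalently $f(x)\in I_n$ for every $x\in B$, that is, $B\subset f^{-1}[I_n]\in\mU$. Hence $\mU\in\mO_{\mB}$, with no claim of nontriviality since some $f^{-1}[I_n]$ might equal $X$. The only genuine obstacle is the separation step in \emph{(a)}; everything else is a routine translation between the uniform-convergence neighborhoods of $\0$ and the $\mB$-covering property. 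The compact base is used precisely to replace an arbitrary bornology member $B$, which need not be closed or compact, by a compact $K\supset B$, so that Tychonoff separation of a compact set from a disjoint closed set becomes available.
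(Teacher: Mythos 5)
Your proof is correct and is essentially the argument the paper has in mind: the paper's own ``proof'' is just a citation to Lemma 2.2 of Caserta \emph{et al.}, and your write-up supplies exactly the standard details behind that reference --- in particular the one nontrivial point, namely using the compact base to replace $B$ by a compact $K\in\mB$ so that complete separation of $K$ from the closed set $X\setminus U$ in a Tychonoff space applies.
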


\begin{proof}
Essentially the same proof of Lemma 2.2 of Caserta \emph{et al.} \cite{Caserta2}.
\end{proof}

The following Theorem is based on a result from Scheepers (\cite{Scheepers1}, Theorem 23):

\begin{theorem}\label{Scheepers1}
Let $X$ be a Tychonoff space and let $\mB$ be a bornology with compact base on $X$. Player II  has a winning strategy
in $\gone(\mathcal{O}_{\mB},\mathcal{O}_{\mB})$ played on $X$  if, and only if,  player
II has a winning strategy in
$\gone(\Omega_{\0},\Omega_{\0})$ played on $C_\mB(X)$. 
\end{theorem}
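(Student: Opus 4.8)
The plan is to prove both implications by turning a winning strategy for player II in one game into a winning strategy for player II in the other, using parts $(a)$ and $(b)$ of the preceding Lemma as a dictionary between nontrivial members of $\mO_{\mB}$ on $X$ and subsets of $C_\mB(X)$ that accumulate at $\0$. I expect the passage from the closure game to the covering game to be routine, and the reverse passage to carry the only genuine difficulty, namely a mismatch between the fixed ``scale'' $\varepsilon=1$ available in part $(a)$ and the shrinking scales $I_m$ forced by part $(b)$.

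First I would treat the implication that a winning strategy $\tau$ for II in $\gone(\Omega_{\0},\Omega_{\0})$ on $C_\mB(X)$ yields one for II in $\gone(\mO_{\mB},\mO_{\mB})$ on $X$. Since II wins immediately by selecting $X$ whenever a trivial covering is offered, I may assume player I always plays nontrivial $\mU_n\in\mO_{\mB}$. Given $\mU_n$, part $(a)$ of the Lemma produces $A_n\in\Omega_{\0}$; feeding $A_0,\dots,A_n$ to $\tau$ returns some $f_n\in A_n$, and by definition of $A_n$ there is $U_n\in\mU_n$ with $f_n\upharpoonright X\menos U_n\equiv 1$, which I have II select. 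Because $\tau$ wins, $\0\in\overline{\{f_n:n\in\w\}}$, and I would close the argument by contradiction: if $\{U_n:n\in\w\}$ were not a $\mB$-covering, some $B\in\mB$ would satisfy $B\not\subseteq U_n$ for all $n$, so picking $x_n\in B\menos U_n$ gives $f_n(x_n)=1$; then no $f_n$ lies in the neighbourhood $\langle B,1\rangle[\0]$ of $\0$, contradicting $\0\in\overline{\{f_n:n\in\w\}}$.

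For the reverse implication, suppose $\sigma$ is winning for II in $\gone(\mO_{\mB},\mO_{\mB})$ on $X$. The naive inning-by-inning translation using part $(b)$ with index $I_n$ at inning $n$ fails, and this is the main obstacle: a single covering $\{f_n^{-1}[I_n]:n\in\w\}$ need not force $\0\in\overline{\{f_n:n\in\w\}}$, since a member equal to $X$ can come from a function merely bounded by $1/(n+1)$ for small $n$ rather than uniformly small. To repair this I would run $\sigma$ along $\w$ many independent threads indexed by precision. Fix a partition $\w=\bigsqcup_{m\in\w}T_m$ into infinite pieces. At real inning $k$, with $m$ the index of the thread containing $k$, I feed to $\sigma$ (as the next move of I in thread $m$) the covering $\mV_k=\{f^{-1}[I_m]:f\in A_k\}$, which lies in $\mO_{\mB}$ by part $(b)$ because $\0\in\overline{A_k}$; letting $\sigma$ answer $f_k^{-1}[I_m]$ for some $f_k\in A_k$, I have II play $f_k$. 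Each thread is a legal play of $\gone(\mO_{\mB},\mO_{\mB})$ won by $\sigma$, so $\{f_k^{-1}[I_m]:k\in T_m\}$ is a $\mB$-covering for every $m$.

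It then remains to read off $\0\in\overline{\{f_k:k\in\w\}}$: given a basic neighbourhood $\langle B,\varepsilon\rangle[\0]$ with $B$ in a compact base of $\mB$, choose $m$ with $1/(m+1)\le\varepsilon$; the thread-$m$ covering yields $k\in T_m$ with $B\subseteq f_k^{-1}[I_m]$, hence $\sup_{x\in B}|f_k(x)|<1/(m+1)\le\varepsilon$, so $f_k\in\langle B,\varepsilon\rangle[\0]$. This is exactly why the threaded construction dissolves the obstacle: at precision $m$ even a ``trivial'' selection $X=f_k^{-1}[I_m]$ corresponds to a function uniformly within $1/(m+1)$ of $\0$, which is precisely what precision $m$ demands. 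Thus every basic neighbourhood of $\0$ meets $\{f_k:k\in\w\}$, the constructed strategy wins $\gone(\Omega_{\0},\Omega_{\0})$, and the equivalence follows.
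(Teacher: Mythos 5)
Your proof is correct, and its overall architecture --- translating strategies inning by inning through parts $(a)$ and $(b)$ of the Lemma --- is the same as the paper's. The direction from a strategy in $\gone(\Omega_{\0},\Omega_{\0})$ to one in $\gone(\mO_{\mB},\mO_{\mB})$ is essentially identical to the paper's argument: discard trivial coverings, pass to $A(\mU_n)$, and read off $B\subset U_n$ from $f_n\in\langle B,1\rangle[\0]$ together with $f_n\upharpoonright X\menos U_n\equiv 1$. The one genuine difference is the device used to handle the precision mismatch in the other direction. The paper runs a \emph{single} play of $\gone(\mO_{\mB},\mO_{\mB})$, feeding $\mU_n(A_n)=\{f^{-1}[I_n]:f\in A_n\}$ at inning $n$ after first reducing to the case $X\notin\mU_n(A_n)$ for all $n$; it then concludes $\{f_n:n\in\w\}\in\Omega_{\0}$ because every cofinite subfamily of the resulting nontrivial $\mB$-covering $\{C_n:n\in\w\}$ is still a $\mB$-covering, so for each $\varepsilon$ one may pass to the tail $n\geq j$ with $\frac{1}{j+1}<\varepsilon$. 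You instead interleave $\w$ many independent plays of the covering game, one for each fixed precision $I_m$, and obtain a full $\mB$-covering at every precision level. Both devices are sound; yours buys freedom from the nontriviality reduction and from the cofinite-subfamily fact (a trivial answer $X=f_k^{-1}[I_m]$ is harmless at precision $m$, exactly as you observe), at the cost of bookkeeping $\w$ threads, while the paper's single-thread version is shorter once those two reductions are in place.
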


\begin{proof}

Let $\Phi$ be a winning strategy for player II in $\gone(\mathcal{O}_{\mB},\mathcal{O}_{\mB})$ on $X$. For each $A\in\Omega_{\0}$ and $n\in\omega$, let $\mathcal{U}_n(A)=\{ f^{-1}[I_n]:f\in A\}$, which is an open $\mB$-covering for $X$ by the above Lemma. Before we define a winning strategy for player II in $\gone(\Omega_{\0},\Omega_{\0})$, note that without loss of generality we may assume that for every inning $n\in\omega$, player I does not play $A_n\in\Omega_{\0}$ such that $X\in \mU_n(A_n)$. In fact, if player I chooses any $A_n\in \Omega_{\0}$ such that $X\in\mU_n(A_n)$, then player II picks an $f_n\in A_n$ such that $f_n^{-1}[I_n]=X$; if $X\in\mU_n(A_n)$ for infinitely many $n$, then it is easy to see that $\{f_n:n\in\omega\}\in\Omega_{\0}$. So, we may assume that $X\not\in\mU_n(A_n)$ for all $n$.

 We define a winning strategy $\eta$ for player II in the game $\gone(\Omega_{\0},\Omega_{\0})$ on $C_\mB(X)$ as follows: for every inning $n\in\omega$, let $\eta((A_0,\dots,A_n))=f_n\in A_n$, where $f_n^{-1}[I_n]=\Phi((\mU_0(A_0),\dots,\mU_n(A_n)))$ is an open set that belongs to the open $\mB$-covering $\mU_n(A_n)$. We will show that $\eta$ is a winning strategy.

Let $(A_0,f_0, A_1,f_1,\dots,A_n, f_n,\dots)$ be a whole play in the game $\gone(\Omega_{\0},\Omega_{\0})$, where for each $n$, $A_n$ is the move of player I and $f_n=\eta((A_0,\dots,A_n))$ is the answer of player II at the inning $n$. Since we have $X\not\in\mU_n(A_n)$ for all $n\in\omega$, it follows that $(\mU_n(A_n))_{n\in\omega}$ is a sequence of nontrivial open $\mB$-coverings for $X$, hence it is a valid sequence of moves for the player I in $\gone(\mO_{\mB},\mO_{\mB})$. Now, let $C_n=\Phi((\mU_m(A_m):m\leq n))$ for all $n$ and note that $f_n^{-1}[I_n]=C_n$. Since $\Phi$ is a winning strategy for player II, the play $(\mU_0(A_0),C_0,\dots,\mU_n(A_n),C_n,\dots)$ in the game $\gone(\mO_{\mB},\mO_{\mB})$ is won by player II. This yields that $\{C_n:n\in\omega\}$ is a nontrivial open $\mB$-covering for $X$, thus $\{C_n:n\geq j\}\in\mO_{\mB}$ for any $j\in\omega$, since we are excluding only finitely many $C_n$'s. Then, we have $\{f_n:n\in\omega\}\in\Omega_{\0}$, as desired.

Conversely, let $\psi$ be a winning strategy for player II in $\gone(\Omega_{\0},\Omega_{\0})$ on $C_\mB(X)$. For each nontrivial $\mU\in\mO_{\mB}$, let $A(\mU)=\{f\in C_{\mB}(X):\exists U\in\mU(f\upharpoonright X\setminus U\equiv 1)\}$, which is an element of $\mO_{\mB}$ by the above Lemma. Clearly we may suppose that the player I does not play trivial open $\mB$-coverings.

We define a winning strategy $\rho$ for player II in $\gone(\mathcal{O}_{\mB},\mathcal{O}_{\mB})$ on $X$ as follows: if $(\mU_0,\dots,\mU_n)$ is a sequence of nontrivial open $\mB$-coverings played by player I, let $\rho((\mU_0,\dots,\mU_n))=U_n\in\mU_n$, where $U_n$ is such that $\psi((A(\mU_0),\dots,A(\mU_n)))\upharpoonright X\setminus U_n\equiv 1$. We now show that $\rho$ is a winning strategy for player II in $\gone(\mO_{\mB},\mO_{\mB})$.

Let $(\mU_0,U_0,\dots,\mU_n,U_n,\dots)$ be a play in the game $\gone(\mO_{\mB},\mO_{\mB})$, where $\mU_n$ and $U_n=\rho((\mU_0,\dots,\mU_n))$ are the moves of player I and player II at the inning $n$, respectively. Calling $f_n=\psi((A(\mU_0),\dots,A(\mU_n)))$ for each $n$, we have that $$(A(\mU_0),f_0,\dots,A(\mU_n),f_n,\dots)$$ is a valid play in $\gone(\Omega_{\0},\Omega_{\0})$, which is won by player II since $\psi$ is a winning strategy. So, $\{f_n:n\in\omega\}\in\Omega_{\0}$, from which it follows that $\{U_n:n\in\omega\}\in\mO_{\mB}$.
%In fact, for a sequence $(\mU_n)_{n\in\omega}$ of nontrivial open $\mB$-coverings for $X$ played by player I in $\gone(\mO_{\mB},\mO_{\mB})$, we have that $(A(\mU_n))_{n\in\omega}$ is a sequence of elements in $\Omega_{\0}$ played by player I in $\gone(\Omega_{\0},\Omega_{\0})$. Since $\psi$ is a winning strategy for player II, it follows that $\{\psi((A(\mU_m):m\leq n)):n\in\omega\}\in\Omega_{\0}$, which give us $\{\rho((\mU_m:m\leq n)):n\in\omega\}\in\mO_{\mB}$.
\end{proof}

Actually, with similar adaptations of the arguments and definitions given by Scheepers in \cite{Scheepers1}, one can prove the following:

\begin{theorem}
Let $X$ be a Tychonoff space, $\mB$ be a bornology with compact base on $X$ and $\alpha\geq\omega$ be a countable ordinal. Player II has a winning strategy in $\gone^{\alpha}(\mO_{\mB},\mO_{\mB})$ played on $X$ if, and only if, player II has a winning strategy in $\gone^{\alpha}(\Omega_{\0},\Omega_{\0})$ played on $C_{\mB}(X)$.
\end{theorem}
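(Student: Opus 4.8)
The plan is to run the two translations from the proof of Theorem~\ref{Scheepers1} inning by inning along $\alpha$, the one genuinely new ingredient being a bookkeeping device for the shrinking intervals. Since $\alpha$ is a countable ordinal with $\alpha\geq\omega$, fix a bijection $h\colon\alpha\to\omega$ and, at inning $\gamma<\alpha$, use the interval $I_{h(\gamma)}$ where the length-$\omega$ argument uses $I_\gamma$. With this convention $A\mapsto\mU_\gamma(A):=\{f^{-1}[I_{h(\gamma)}]:f\in A\}$ still sends a member of $\Omega_{\0}$ to an open $\mB$-covering (by the Lemma), and $h[S]$ is cofinal in $\omega$ for every infinite $S\subseteq\alpha$; this last fact is exactly what allows a tail argument to produce functions uniformly small on a prescribed $B\in\mB$ at arbitrarily fine scales.

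For the implication from $X$ to $C_{\mB}(X)$ I would begin with a winning strategy $\Phi$ for II in $\gone^{\alpha}(\mO_{\mB},\mO_{\mB})$ and define II's move at inning $\gamma$ as follows: given I's move $A_\gamma\in\Omega_{\0}$, form $\mU_\gamma(A_\gamma)$; if $X\in\mU_\gamma(A_\gamma)$, play some $f_\gamma\in A_\gamma$ with $f_\gamma^{-1}[I_{h(\gamma)}]=X$, and otherwise copy $\Phi$ by playing $f_\gamma\in A_\gamma$ with $f_\gamma^{-1}[I_{h(\gamma)}]$ equal to $\Phi$'s response to the nontrivial coverings presented so far. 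To check that this wins, split on the set $T=\{\gamma:X\in\mU_\gamma(A_\gamma)\}$. If $T$ is infinite, the functions $\{f_\gamma:\gamma\in T\}$ are uniformly within $I_{h(\gamma)}$ on all of $X$, and since $h[T]$ is cofinal in $\omega$ these already witness $\0\in\overline{\{f_\gamma:\gamma<\alpha\}}$. If $T$ is finite, the innings outside $T$ are nontrivial and (after the completion described below) $\Phi$ returns along them a nontrivial open $\mB$-covering $\{C_\gamma:\gamma\notin T\}$; since every cofinite-in-$h$ subfamily of a nontrivial covering is again a $\mB$-covering, for each $B\in\mB$ and $\varepsilon>0$ there is $\gamma$ with $C_\gamma\supseteq B$ and $\tfrac{1}{h(\gamma)+1}<\varepsilon$, that is $f_\gamma\in\langle B,\varepsilon\rangle[\0]$. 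Either way $\{f_\gamma:\gamma<\alpha\}\in\Omega_{\0}$.

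The converse is a direct transcription of the length-$\omega$ argument and involves no ordinal subtlety. From a winning strategy $\psi$ for II in $\gone^{\alpha}(\Omega_{\0},\Omega_{\0})$, attach to each nontrivial $\mU\in\mO_{\mB}$ the set $A(\mU)\in\Omega_{\0}$ of the Lemma, let I's moves $\mU_\gamma$ (nontrivial, without loss of generality) feed $\psi$, and let II answer $U_\gamma\in\mU_\gamma$ chosen so that $\psi((A(\mU_\beta):\beta\leq\gamma))$ is identically $1$ on $X\setminus U_\gamma$. Writing $f_\gamma$ for that value of $\psi$, the play $(A(\mU_\beta),f_\beta)_{\beta<\alpha}$ is won by II, so $\{f_\gamma:\gamma<\alpha\}\in\Omega_{\0}$; testing this closure against $\langle B,\tfrac{1}{2}\rangle[\0]$ for $B\in\mB$ forces $B\subseteq U_\gamma$ for some $\gamma$, whence $\{U_\gamma:\gamma<\alpha\}\in\mO_{\mB}$.

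The step I expect to be the real obstacle is the finite-$T$ bookkeeping in the forward direction when $\alpha$ is a successor: deleting the finitely many trivial innings can drop the order type of the surviving innings below $\alpha$, so $\Phi$, a strategy for a play of length exactly $\alpha$, is not literally run to the end. The fix is to invoke $\Phi$ on a completed play, padding the genuine nontrivial innings with finitely many fixed dummy nontrivial coverings until length $\alpha$ is reached; the amount of padding is finite because order type can only be lost in the terminal successor block of $\alpha$. The resulting length-$\alpha$ play is won by $\Phi$, so its responses form a nontrivial open $\mB$-covering, and deleting the finitely many padded responses leaves the covering $\{C_\gamma:\gamma\notin T\}$ used above. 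Making this padding precise, and verifying that the finitely many deletions preserve the $\mB$-covering property, is the point I would write out with the most care.
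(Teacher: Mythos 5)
Your proposal is correct and follows the route the paper itself intends: the paper offers no written proof of this theorem beyond the remark that it follows by ``similar adaptations'' of the proof of Theorem~\ref{Scheepers1}, and your argument is precisely that adaptation, with the two genuinely new points (reindexing the intervals $I_n$ by a bijection $h\colon\alpha\to\omega$ so that infinite sets of innings still give arbitrarily fine scales, and the finite padding needed because discarding the trivial innings can shorten the order type only within the terminal successor block of $\alpha$) correctly identified and handled. No gaps; the padding is most cleanly implemented online, by feeding $\Phi$ a fixed dummy nontrivial $\mB$-covering at each trivial inning and discarding the finitely many corresponding responses at the end, which is equivalent to what you describe.
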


By puting $\mB=\mF$ on the above theorem we obtain the original Scheepers' result. If $\mB=\frak{K}$ and $\alpha=\omega$, then we have the following:

\begin{corollary}\label{res1}
Let $X$ be a Tychonoff space. Player II  has a winning strategy
in $\gone(\mK,\mK)$ played on $X$  if, and only if,  player
II has a winning strategy in
$\gone(\Omega_{\0},\Omega_{\0})$ played on $C_k(X)$. 
\end{corollary}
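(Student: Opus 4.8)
The plan is to recognize that this corollary is simply the specialization of Theorem \ref{Scheepers1} to the bornology $\mB=\frak{K}$ of relatively compact sets, so the work is almost entirely a matter of checking that the hypotheses apply and then translating the notation. First I would verify that $\frak{K}$ qualifies as a bornology with compact base: since $X$ is Tychonoff, it is in particular Hausdorff, so $\frak{K}=\{A\subset X:\overline{A}\text{ is compact}\}$, and the family of compact subsets of $X$ is cofinal in $\frak{K}$ with respect to inclusion. Hence $\frak{K}$ is a bornology admitting a compact base, exactly as required by the hypothesis of Theorem \ref{Scheepers1}.

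Next I would invoke the identifications recorded in the Example of Section 2: for $\mB=\frak{K}$ on a Hausdorff space one has $C_{\frak{K}}(X)=C_k(X)$, since the topology of uniform convergence on the relatively compact sets coincides with the compact-open topology; moreover the open $\frak{K}$-coverings of $X$ are precisely the open $K$-coverings, so that $\mO_{\frak{K}}=\mK$. Under these two identifications the games $\gone(\mO_{\frak{K}},\mO_{\frak{K}})$ and $\gone(\Omega_{\0},\Omega_{\0})$ that occur in Theorem \ref{Scheepers1} become, respectively, the game $\gone(\mK,\mK)$ played on $X$ and the game $\gone(\Omega_{\0},\Omega_{\0})$ played on $C_k(X)$.

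Therefore the corollary follows by applying Theorem \ref{Scheepers1} verbatim with $\mB=\frak{K}$ and reading off the resulting equivalence through these translations; equivalently, one may set $\alpha=\omega$ in the subsequent theorem, which is the same assertion. I do not expect any genuine obstacle in this argument: the only point demanding attention is the preliminary observation that $\frak{K}$ carries a compact base, and this is immediate from the Hausdorffness guaranteed by the Tychonoff assumption. Everything else is a substitution of notation, with no new combinatorial or topological content beyond what Theorem \ref{Scheepers1} already provides.
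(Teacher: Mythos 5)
Your proposal is correct and matches the paper's own derivation: the corollary is obtained exactly by specializing Theorem \ref{Scheepers1} (equivalently, the $\alpha$-version with $\alpha=\omega$) to $\mB=\frak{K}$, using the identifications $C_{\frak{K}}(X)=C_k(X)$ and $\mO_{\frak{K}}=\mK$ from the Example in Section 2. Your preliminary check that $\frak{K}$ is a bornology with compact base on a Hausdorff (hence Tychonoff) space is the only substantive point, and you handle it correctly.
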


The next theorem lead us to the generalization of Theorem \ref{Uspenskii} that we are looking for. First, observe that since a bornology $\mB$ is a structure on the set $X$, it makes sense to use $l_{\mB}(X_\kappa)$ to denote the $\mB$-Lindel\"of degree of the $\kappa$-modification of $X$, i.e., the least transfinite cardinal $\lambda$ such that for every $\mB$-covering $\mU$ for $X$ made by $G_{\kappa}$-sets there exists a $\mB$-subcovering $\mU'\subset \mU$ with $|\mU'|\leq\lambda$. We would like to thank Angelo Bella for his suggestions that improved one of the implications of our original statement.

\begin{theorem}\label{Uspenskii1}
For a Tychonoff space $X$ and a bornology $\mB$ with compact base on $X$, $\kappa\in Sp(C_{\mB}(X))$ if, and only if, $l_{\mB}(X_\kappa)\leq \kappa$.
%\begin{enumerate}
%\item $\kappa\in Sp(C_\mB(X))$;
%\item $l_{\mB}(X_\kappa)\leq\kappa$, then $\kappa\in Sp(C_{\mB}(X))$.
%\end{enumerate}
\end{theorem}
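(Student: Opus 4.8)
The plan is to route everything through Arhangel'skii's internal characterization (Theorem \ref{ArhanSp}). Since $C_\mB(X)$ is a topological group under pointwise addition (translations are $\mathcal{T}_\mB$-homeomorphisms), it is homogeneous, so $Sp(f,C_\mB(X))$ is independent of $f$; hence $\kappa\in Sp(C_\mB(X))$ iff $\kappa\in Sp(\0,C_\mB(X))$, which by Theorem \ref{ArhanSp} holds iff $C_\mB(X)$ is \emph{not} $\kappa$-singular at $\0$. I would therefore establish the equivalent statement that $C_\mB(X)$ is $\kappa$-singular at $\0$ if and only if $l_\mB(X_\kappa)>\kappa$. Throughout I fix a compact base $\mathcal{B}_0$ of $\mB$ and use two standard facts: the neighbourhoods $\langle B,\varepsilon\rangle[\0]$ with $B\in\mathcal{B}_0$ form a base at $\0$, and a compact set and a disjoint closed set in a Tychonoff space are separated by a continuous $[0,1]$-valued function (finite subcover of complete-regularity functions, then a reclipping).

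For the implication $l_\mB(X_\kappa)\leq\kappa\Rightarrow$ not $\kappa$-singular, let $\mP$ be any family satisfying $(a)$--$(c)$ at $\0$; I must show $(d)$ fails. The device is that for a member $A$ (of size $\leq\kappa$) and $\varepsilon>0$ the set $G_{A,\varepsilon}=\bigcap_{f\in A}f^{-1}[(-\varepsilon,\varepsilon)]$ is a $G_\kappa$-set, and $A\subseteq\langle B,\varepsilon\rangle[\0]$ iff $B\subseteq G_{A,\varepsilon}$. By $(c)$, for each $k$ the family $\{G_{A,1/k}:\xi\in\mP,\ A\in\xi\}$ is a $\mB$-covering of $X$ by $G_\kappa$-sets, so by hypothesis it admits a $\mB$-subcovering of size $\leq\kappa$; collecting one family $\xi$ per chosen set, over all $k\in\w$, yields $\mE\in[\mP]^{\leq\kappa}$. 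The key observation is that if $\xi$ is centered and some $A_0\in\xi$ satisfies $A_0\subseteq\langle B,\varepsilon\rangle[\0]$, then every $A\in\xi$ meets $\langle B,\varepsilon\rangle[\0]$, since $\emptyset\neq A\cap A_0\subseteq A_0$. Consequently, for every selection $A(\xi)\in\xi$ and every basic neighbourhood $\langle B,1/k\rangle[\0]$, some $\xi\in\mE$ has $A(\xi)\cap\langle B,1/k\rangle[\0]\neq\emptyset$, whence $\0\in\overline{\bigcup_{\xi\in\mE}A(\xi)}$. Thus no selection separates $\0$, and $(d)$ fails.

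For the converse I argue contrapositively, constructing a family $\kappa$-singular at $\0$ from a witness to $l_\mB(X_\kappa)>\kappa$. Fix a $\mB$-covering $\mathcal{V}$ of $X$ by $G_\kappa$-sets with no $\mB$-subcovering of size $\leq\kappa$, and for each $V\in\mathcal{V}$ write $V=\bigcap_{\alpha<\kappa}W^V_\alpha$ with $W^V_\alpha$ open; for finite $F\subseteq\kappa$ put $W^V_F=\bigcap_{\alpha\in F}W^V_\alpha$, so that $\{W^V_F:F\in[\kappa]^{<\w}\}$ is a downward directed family of open supersets of $V$ with $\bigcap_F W^V_F=V$. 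For each $V$ and each $B\in\mathcal{B}_0$ with $B\subseteq V$, choose for every finite $F$ a continuous $g^{V,B}_F:X\to[0,1]$ that is $\equiv 1$ off $W^V_F$ and $\equiv 0$ on $B$, and define the tail members $A^{V,B}_F=\{g^{V,B}_{F'}:F'\supseteq F\}$ and the families $\xi_{V,B}=\{A^{V,B}_F:F\in[\kappa]^{<\w}\}$; set $\mP=\{\xi_{V,B}:V\in\mathcal{V},\ B\in\mathcal{B}_0,\ B\subseteq V\}$. Each member has size $\leq\kappa$, giving $(a')$; since $A^{V,B}_{F_1}\cap A^{V,B}_{F_2}\supseteq A^{V,B}_{F_1\cup F_2}\neq\emptyset$ each $\xi_{V,B}$ is directed, hence centered, giving $(b)$; and as every $g^{V,B}_F$ vanishes on $B$, the member $A^{V,B}_\emptyset$ lies in $\langle B,\varepsilon\rangle[\0]$, so because $\mathcal{V}$ is a $\mB$-covering every basic neighbourhood is handled, giving $(c)$.

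The main obstacle is verifying $(d)$, and this is precisely where the $G_\kappa$-structure and the badness of $\mathcal{V}$ enter. Given $\mE\in[\mP]^{\leq\kappa}$, its set of first coordinates $\mathcal{V}'$ has size $\leq\kappa$, so there is $B^*\in\mathcal{B}_0$ with $B^*\not\subseteq V$ for all $V\in\mathcal{V}'$. For each such $V$ pick $x\in B^*\menos V$ and, using $V=\bigcap_\alpha W^V_\alpha$, an index $\alpha_0$ with $x\notin W^V_{\alpha_0}$; selecting the representative $A^{V,B}_{\{\alpha_0\}}$ forces every function in it to equal $1$ at $x\in B^*$, because $F'\supseteq\{\alpha_0\}$ gives $W^V_{F'}\subseteq W^V_{\alpha_0}$ and hence $g^{V,B}_{F'}\equiv 1$ at $x$. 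Therefore every function in $\bigcup_{\xi\in\mE}A(\xi)$ exceeds $1/2$ somewhere on $B^*$, so $\langle B^*,1/2\rangle[\0]$ misses this union and $\0\notin\overline{\bigcup_{\xi\in\mE}A(\xi)}$, establishing $(d)$. The delicate point throughout is arranging the members to be simultaneously of size $\leq\kappa$, centered, cofinal toward $\0$ on each $B$, and escapable on $B^*$; the finite-$F$ tails are what reconcile centeredness with the coordinatewise escape furnished by the $G_\kappa$-representation of each $V$.
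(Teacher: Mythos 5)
Your proposal is correct and follows essentially the same route as the paper: both directions go through Arhangel'skii's characterization at $\0$ via homogeneity, the singular family is built from Tychonoff separating functions organized into centered cones/tails over a directed decomposition of each $G_\kappa$-set, and the converse uses the $G_\kappa$-sets $\bigcap_{f\in A}f^{-1}[(-\varepsilon,\varepsilon)]$ together with centeredness to defeat condition $(d)$. The only differences (indexing the tails by $[\kappa]^{<\omega}$ rather than by a family closed under finite intersections, and working with a compact base instead of $\overline{B}$) are cosmetic.
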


\begin{proof}
Let $\mathcal{G}$ be a nontrivial $\mB$-covering for $X$ made by $G_{\kappa}$-sets which does not contain any $\mB$-subcovering of cardinality less than or equal to $\kappa$. We will show that $C_{\mB}(X)$ is not a productively $\kappa$-tight space. Since $C_{\mB}(X)$ is a homogenous space, by Theorem \ref{ArhanSp} it follows that it is equivalent to show that $C_{\mB}(X)$ is $\kappa$-singular at $\0$.

For each $B\in\mB$, there exists $G\in\mathcal{G}$ such that $\overline{B}\subset G$ and $G=\bigcap\mU(G)$, where $\mU(G)$ is a collection of open subsets of $X$ and $|\mU(G)|\leq\kappa$. Without loss of generality, we may assume that $\mU(G)$ is closed under finite intersections. Since $X$ is a Tychonoff space, for any $U\in\mU(G)$ we may fix a function $f_{B,U}\in C(X)$ satisfying $f_{B,U}\upharpoonright \overline{B}\equiv 0$ and $f_{B,U}\upharpoonright (X\setminus U)\equiv 1$. Now, define $A_{B,G,U}=\{f_{B,V}:V\in\mU(G),V\subset U\}$ and let $\mA_{B,G}=\{A_{B,G,U}:U\in\mU(G)\}$. We claim that $\mathbb{P}=\{\mA_{B,G}:B\in\mB,G\in\mathcal{G}, \overline{B}\subset G\}$ is $\kappa$-singular at $\0$.

By construction, each member of $\mA_{B,G}$ has cardinality at most $\kappa$. Furthermore, given $U_0,\dots,U_{n}\in\mU(G)$ for some $G\in\mathcal{G}$ with $\overline{B}\subset G$,  we have that $U=\bigcap_{i<n+1}U_i\in\mU(G)$ and then $f_{B,U}\in A_{B,G,U_0}\cap\dots\cap A_{B,G,U_{n}}$, from which it follows that $\mathcal{A}_{B,G}$ is a centered family. Moreover, for an arbitrary neighborhood $\langle B,\varepsilon\rangle[\0]$ of $\0$, there exists $G\in\mathcal{G}$ such that $\overline{B}\subset G$, thus $A_{B,G,U}\subset\langle B,\varepsilon\rangle[\0]$ for every $U\in\mU(G)$.

Finally, let $\{\mA_{B_\alpha,G_\alpha}:\alpha<\kappa\}\subset\mathbb{P}$. By our assumption about $\mathcal{G}$, it follows that $\{G_\alpha:\alpha<\kappa\}$ is not a $\mB$-covering for $X$ and so there is some $B\in\mB$ satisfying $B\not\subset G_\alpha$ for all $\alpha<\kappa$. Since each $G_\alpha=\bigcap\mU(G_\alpha)$, there exists $U_{\alpha}\in \mU(G_\alpha)$ such that $B\setminus U_\alpha\ne\emptyset$. Note that by the way we defined the functions in $A_{B_{\alpha},G_{\alpha},U_{\alpha}}$, it follows that each element of $A_{B_{\alpha},G_{\alpha},U_{\alpha}}$ takes value 1 in some point of $B$. Thus, the neighborhood $\langle B,\frac{1}{2}\rangle[\0]$ does not intersect $A_{B_{\alpha},G_{\alpha},U_{\alpha}}$ for all $\alpha<\kappa$, and so $\0\not\in\overline{\bigcup\{A_{B_{\alpha},G_{\alpha},U_{\alpha}}:\alpha<\kappa\}}$. Therefore, the family $\mathbb{P}$ is $\kappa$-singular at $\0$, as desired.

%Indeed, $\langle B,\frac{1}{2}\rangle[\0]\cap\bigcup_{\alpha<\kappa}A_\alpha=\emptyset$. To see this, note that if $f\in \bigcup_{\alpha<\kappa}A_\alpha$, then $f\in A_{B_\alpha,U_\alpha,n_\alpha}$ for some $\alpha$, hence $f=f_{B_\alpha,U_\alpha,m}$ for some  $m\geq n_\alpha$ and, by construction, $f\upharpoonright X\setminus U_\alpha(m)\equiv m+1$. Since $U_\alpha(m)\subset U_\alpha(n_\alpha)$ and $B\not\subset U_\alpha(\alpha_n)$, it follows that $|f(x)|>\frac{1}{2}$ for some $x\in B$, i.e., $f\not\in\langle B,\frac{1}{2}\rangle[\0]$. This proves $(1)$.

Conversely, suppose that every $\mB$-covering for $X$ made by $G_\kappa$-sets has a $\mB$-subcovering of cardinality less than or equal to $\kappa$. We show that $C_{\mB}(X)$ is not $\kappa$-singular at $\0$. Let $\mP=\{\mathcal{A}_{\alpha}\}_{\alpha\in I}$ be a collection satisfying conditions $(a)',(b)$ and $(c)$ in the definition of $\kappa$-singular family. We will show that condition $(d)$ does not hold.

For each $\alpha \in I$, $A\in \mathcal{A}_{\alpha}$ and $n\in\omega$, consider the set $\mathcal{U}_{A,n}=\left\{f^{-1}\left[I_n\right]:f\in A\right\},$ and let $\frak{U}_n=\{\bigcap\mathcal{U}_{A,n}:\alpha \in I$ and $A\in\mathcal{A}_{\alpha}\}$. We claim that $\frak{U}_n$ is a $\mB$-covering for $X$ made by $G_\kappa$-sets. Indeed, given $B\in\mB$, we have $\langle B,\frac{1}{n+1}\rangle[\0]$ a neighborhood of $\0$ and, by condition $(c)$, it follows that there are $\alpha \in I$ and $A\in\mathcal{A}_{\alpha}$ ($|A|\leq \kappa$ by condition $(a)'$) such that $A\subset \langle B,\frac{1}{n+1}\rangle[\0]$, so $\mU_{A,n}$ has cardinality at most $\kappa$ and $B\subset \bigcap\mathcal{U}_{A,n}$.

By the hypothesis, each $\frak{U}_n$ has a $\mB$-subcovering $\frak{U}'_n=\{U_{\lambda,n}:\lambda<\kappa\}$. Note that for each $\lambda<\kappa$ and $n\in\omega$ we may choose $\alpha_{\lambda,n}\in I$ and $C_{\lambda,n}\in \mathcal{A}_{\alpha_{\lambda,n}}$ with $U_{\lambda,n}=\bigcap\mathcal{U}_{C_{\lambda,n},n}$. We claim that $\{\mathcal{A}_{\alpha_{\lambda,n}}:\lambda<\kappa,n\in\omega\}$ witnesses that $(d)$ does not hold.

In fact, for each $\lambda<\kappa$ and $n\in\omega$ choose $A_{\lambda,n}\in\mA_{\alpha_{\lambda,n}}$, and let $\langle B,\varepsilon\rangle[\0]$ be a neighborhood of $\0$. Fix $n\in\omega$ with $\frac{1}{n+1}<\varepsilon$. Since $\frak{U}'_{n}$ is a $\mB$-subcovering, there exists an $U_{\lambda,n}$ with $B\subset U_{\lambda,n}$. So, we have $C_{\lambda,n}\subset\langle B,\varepsilon\rangle[\0]$, from which it follows that $A_{\lambda,n}\cap\langle B,\varepsilon\rangle[\0]\ne\emptyset$, since $A_{\lambda,n}\cap C_{\lambda,n}\ne \emptyset$ by condition $(b)$. This ends the proof.
\end{proof}

By making $\kappa=\aleph_0$ on the above theorem, we obtain the following:

\begin{corollary}
Let $X$ be a Tychonoff space and let $\mB$ be a bornology with compact base on $X$. $C_{\mB}(X)$ is productively countably tight if, and only if, every $\mB$-covering for $X$ made by $G_{\delta}$-sets has a countable $\mB$-subcovering.
\end{corollary}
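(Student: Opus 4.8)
The plan is to obtain the corollary as the special case $\kappa=\aleph_0$ of Theorem \ref{Uspenskii1}, so the entire argument reduces to unwinding the relevant definitions at this particular cardinal. First I would observe that, by definition, $C_{\mB}(X)$ being productively countably tight is precisely the assertion $\aleph_0\in Sp(C_{\mB}(X))$, since ``productively countably tight'' abbreviates ``productively $\kappa$-tight'' with $\kappa=\aleph_0$. Thus Theorem \ref{Uspenskii1} applied with $\kappa=\aleph_0$ immediately gives the equivalence of this property with the numerical condition $l_{\mB}(X_{\aleph_0})\leq\aleph_0$, and it only remains to translate that condition into the covering statement appearing in the corollary.

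The key translation has two pieces. First, I would identify the space $X_{\aleph_0}$: its open sets are the $G_{\aleph_0}$-sets of $X$, and a $G_{\aleph_0}$-set is by definition an intersection of at most $\aleph_0$ open sets, that is, a countable intersection of open sets, which is exactly a $G_\delta$-set. Hence the $\mB$-coverings of $X$ whose members are open in $X_{\aleph_0}$ are precisely the $\mB$-coverings of $X$ made by $G_\delta$-sets. Second, I would unpack the inequality $l_{\mB}(X_{\aleph_0})\leq\aleph_0$ directly from the definition of the $\mB$-Lindel\"of degree: it says that every $\mB$-covering for $X$ made by $G_{\aleph_0}$-sets admits a $\mB$-subcovering of cardinality at most $\aleph_0$, and ``cardinality at most $\aleph_0$'' is synonymous with ``countable''. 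Combining the two pieces yields exactly the stated conclusion, namely that every $\mB$-covering for $X$ made by $G_\delta$-sets has a countable $\mB$-subcovering.

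There is essentially no obstacle here beyond these definitional identifications, since all the mathematical content of the corollary already resides in Theorem \ref{Uspenskii1}. The only point deserving a line of care is the clause in the definition of $l_{\mB}(X_\kappa)$ restricting attention to $\mB$-coverings ``made by $G_\kappa$-sets'', where I would confirm that at $\kappa=\aleph_0$ this produces exactly the $G_\delta$-covers and nothing broader. Once that identification is in place, the biconditional of the corollary follows by reading the two specialized conditions against each other, with no further estimates or constructions required.
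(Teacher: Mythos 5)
Your proposal is correct and matches the paper exactly: the paper derives this corollary simply by setting $\kappa=\aleph_0$ in Theorem \ref{Uspenskii1}, and your careful unwinding of $G_{\aleph_0}$-sets as $G_\delta$-sets and of $l_{\mB}(X_{\aleph_0})\leq\aleph_0$ as the countable subcovering condition is precisely the intended (and only) content of the deduction.
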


\section{Further applications}

Recall that a space $X$ is said to be an {\bf Alster space} if for every $K$-covering for $X$ made by $G_\delta$'s there is a countable subcovering. In \cite{Alster}, it was showed that if $X$ is an Alster space, then $X\times Y$ is a Lindel\"of space for any Lindel\"of space $Y$, i.e., $X$ is a {\bf productively Lindel\"of space}, and under {\bf CH}, every productively Lindel\"of space with a base of cardinality at most $\omega_1$ is an Alster space.

So, it is natural to say that a topological space $X$ is {\bf strongly Alster} if every $K$-covering for $X$ made by $G_{\delta}$-sets has a countable $K$-subcovering. Since strongly Alster condition trivially implies Alster condition, which in turns implies productively Lindel\"ofness, we have obtained the following

\begin{corollary}\label{res2}
For a Tychonoff space $X$, $C_{k}(X)$ is productively countably tight if, and only if, $X$ is a strongly Alster space.
\end{corollary}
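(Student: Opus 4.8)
The plan is to recognize the statement as the special case $\mB=\frak{K}$ of the Corollary immediately preceding it (equivalently, the $\kappa=\aleph_0$ instance of Theorem \ref{Uspenskii1}), so that no new argument is required beyond matching definitions.

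First I would recall the identifications collected in the Example. Since a Tychonoff space is in particular Hausdorff, taking $\mB=\frak{K}$ (the bornology of relatively compact subsets of $X$) gives $C_{\frak{K}}(X)=C_k(X)$, and the $\frak{K}$-coverings for $X$ are exactly the $K$-coverings. Consequently, a $\frak{K}$-covering made by $G_\delta$-sets is precisely a $K$-covering made by $G_\delta$-sets, and a countable $\frak{K}$-subcovering is precisely a countable $K$-subcovering.

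Then I would apply the preceding Corollary with $\mB=\frak{K}$. It asserts that $C_{\frak{K}}(X)=C_k(X)$ is productively countably tight if, and only if, every $\frak{K}$-covering for $X$ made by $G_\delta$-sets has a countable $\frak{K}$-subcovering. Rewriting the right-hand side through the identifications above, this reads: every $K$-covering for $X$ made by $G_\delta$-sets has a countable $K$-subcovering, which is verbatim the definition of a strongly Alster space. Hence the two sides of the desired equivalence coincide. I do not expect any genuine obstacle here: the entire mathematical content is already carried by Theorem \ref{Uspenskii1} and its $\kappa=\aleph_0$ corollary, and all that remains is to confirm that the compact bornology $\frak{K}$ turns the abstract $\mB$-covering language into the classical $K$-covering language, so that the $\aleph_0$-modification Lindel\"of-degree condition $l_{\frak{K}}(X_{\aleph_0})\leq\aleph_0$ is literally the strongly Alster property.
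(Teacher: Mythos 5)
Your proposal is correct and is exactly how the paper obtains Corollary \ref{res2}: the paper states it without further argument as the instance $\mB=\frak{K}$ of the preceding corollary (the $\kappa=\aleph_0$ case of Theorem \ref{Uspenskii1}), using the identifications $C_{\frak{K}}(X)=C_k(X)$ and ``$\frak{K}$-covering $=$ $K$-covering'' from the Example, which is precisely your reasoning.
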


\begin{corollary}\label{ckpl}Let $X$ be a Tychonoff space. If $C_k(X)$ is productively countably tight, then $X$ is productively Lindel\"of.
\end{corollary}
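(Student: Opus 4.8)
The plan is to chain together the results already established in this section. Corollary~\ref{res2} tells us that if $C_k(X)$ is productively countably tight, then $X$ is a strongly Alster space; so the whole task reduces to showing that strongly Alster implies productively Lindel\"of. The intermediate observations recorded just before Corollary~\ref{res2} do most of this work: the strongly Alster condition (every $K$-covering by $G_\delta$-sets admits a countable $K$-\emph{sub}covering) trivially strengthens the Alster condition (every $K$-covering by $G_\delta$-sets admits a countable subcovering, not necessarily a $K$-subcovering), and Alster~\cite{Alster} proved that Alster spaces are productively Lindel\"of.

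So the proof I would write is essentially a two-line deduction. First I would invoke Corollary~\ref{res2} to pass from the hypothesis ``$C_k(X)$ is productively countably tight'' to ``$X$ is strongly Alster.'' Then I would observe that a strongly Alster space is in particular an Alster space, since a countable $K$-subcovering is a fortiori a countable subcovering. Finally I would cite Alster's theorem that every Alster space is productively Lindel\"of, which yields the conclusion.

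There is essentially no obstacle here, since every ingredient has already been assembled in the paper: this corollary is a packaging statement that combines Corollary~\ref{res2} with the classical implications discussed in the opening of Section~4. The only point that warrants a sentence of care is the direction of the implication between the strongly Alster and Alster conditions: one must be sure that demanding a countable $K$-subcovering (a subfamily that is still a $K$-covering) is at least as strong as demanding a countable subcovering in Alster's original sense. Since a $K$-subcovering is a subfamily, it is a subcovering, and the implication is immediate; the converse can fail, which is precisely why the authors introduced the new ``strongly Alster'' terminology. Thus the short formal proof is simply: $C_k(X)$ productively countably tight $\Rightarrow$ $X$ strongly Alster (Corollary~\ref{res2}) $\Rightarrow$ $X$ Alster $\Rightarrow$ $X$ productively Lindel\"of (Alster~\cite{Alster}).
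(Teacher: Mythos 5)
Your proposal is correct and matches the paper's (implicit) argument exactly: the paper derives Corollary~\ref{ckpl} by the same chain, namely Corollary~\ref{res2} followed by the observation that strongly Alster trivially implies Alster, which implies productively Lindel\"of by Alster's theorem. Nothing further is needed.
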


Indeed, the strongly Alster condition is stronger than the Alster one. To see this, note that in a space in which every compact set is a $G_\delta$-set, the Alster condition is equivalent to $\sigma$-compactness, while strongly Alster condition is equivalent to hemicompactness. Thus, $C_k(\QQ)$ is not productively countably tight, since $\QQ$ is not hemicompact, while $C_p(\QQ)$ is productively countably tight, since $C_p(\QQ)$ is first countable. On the other hand, since the space $P$ of the irrational numbers is not an Alster space, it follows that $C_{\mB}(P)$ is not productively countably tight for every bornology $\mB$ on $P$ with compact base (cf. Corollary \ref{lastcor}). 

We denote by $\sone(\mA,\mC)$ the following selection principle: for each sequence $(A_n)_{n\in\omega}$ of elements of $\mA$ there exists a sequence $(a_n)_{n\in\omega}$ with $a_n\in A_n$ for all $n\in\omega$ such that $\{a_n:n\in\omega\}\in\mC$. In \cite{Kocinac}, Ko\v cinac proved\footnote{A slightly adaptation of Ko\v cinac's proof shows that for any bornology $\mB$ with compact base on $X$, $C_{\mB}(X)$ satisfies $\sone(\Omega_\0,\Omega_\0)$ iff $X$ satisfies $\sone(\mO_{\mB},\mO_\mB)$.} the next result.
\begin{theorem}[Ko\v cinac, \cite{Kocinac}, Theorem 2.2]\label{Kocinac} For a Tychonoff space $X$, $C_k(X)$ satisfies $\sone(\Omega_\0,\Omega_\0)$ if, and only if, $X$ satisfies $\sone(\mK,\mK)$.
\end{theorem}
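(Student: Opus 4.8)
The plan is to prove the equivalence by imitating the proof of Theorem~\ref{Scheepers1}, replacing the winning strategies by single-round selections; by the footnoted remark it is in fact cleaner to establish the general statement that $C_\mB(X)$ satisfies $\sone(\Omega_\0,\Omega_\0)$ if and only if $X$ satisfies $\sone(\mO_{\mB},\mO_{\mB})$, and then recover Theorem~\ref{Kocinac} by putting $\mB=\frak K$, since then $\mO_{\frak K}=\mK$ and $C_{\frak K}(X)=C_k(X)$.

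For the forward implication (from $X$ to $C_\mB(X)$), suppose $X$ satisfies $\sone(\mO_{\mB},\mO_{\mB})$ and let $(A_n)_{n\in\omega}$ be a sequence in $\Omega_\0$. First I would form $\mU_n=\{f^{-1}[I_n]:f\in A_n\}$, which is an open $\mB$-covering by part~(b) of the Lemma. Exactly as in Theorem~\ref{Scheepers1}, I dispose of the degenerate case: if $X\in\mU_n$ for infinitely many $n$, then picking $f_n\in A_n$ with $f_n^{-1}[I_n]=X$ for those $n$ already yields $\{f_n:n\in\omega\}\in\Omega_\0$; otherwise I discard the finitely many such indices and assume every $\mU_n$ is nontrivial. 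Feeding $(\mU_n)_{n\in\omega}$ into $\sone(\mO_{\mB},\mO_{\mB})$ produces $U_n\in\mU_n$ with $\{U_n:n\in\omega\}\in\mO_{\mB}$, and writing $U_n=f_n^{-1}[I_n]$ with $f_n\in A_n$ fixes the selection.

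The delicate point, which I expect to be the main obstacle, is checking $\0\in\overline{\{f_n:n\in\omega\}}$, because the bare output of $\sone$ only asserts that the \emph{whole} set $\{U_n:n\in\omega\}$ is a $\mB$-covering, and matching the shrinking intervals $I_n$ to an arbitrary radius $\varepsilon$ requires a tail, not just the covering property. Here I would reuse the tail argument of Theorem~\ref{Scheepers1}: since each $U_n\ne X$ the covering is nontrivial, so by the observation that a nontrivial $\mB$-covering remains a $\mB$-covering after deleting finitely many members, every tail $\{U_n:n\ge j\}$ is again a $\mB$-covering. Given $\langle B,\varepsilon\rangle[\0]$, I fix $j$ with $\frac{1}{j+1}<\varepsilon$ and use that some $m\ge j$ satisfies $B\subset U_m=f_m^{-1}[I_m]$; then $m\ge j$ forces $|f_m|<\frac{1}{m+1}\le\frac{1}{j+1}<\varepsilon$ on $B$, so $f_m\in\langle B,\varepsilon\rangle[\0]$, and $\0$ lies in the closure of the selection.

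For the converse I would reverse the translation. Assuming $C_\mB(X)$ satisfies $\sone(\Omega_\0,\Omega_\0)$, let $(\mU_n)_{n\in\omega}$ be a sequence of $\mB$-coverings, without loss of generality nontrivial. Setting $A(\mU_n)=\{f\in C_\mB(X):\exists U\in\mU_n\,(f\upharpoonright X\setminus U\equiv 1)\}$, which is in $\Omega_\0$ by part~(a) of the Lemma, I apply $\sone(\Omega_\0,\Omega_\0)$ to get $f_n\in A(\mU_n)$ with $\{f_n:n\in\omega\}\in\Omega_\0$, and let $U_n\in\mU_n$ witness $f_n\upharpoonright X\setminus U_n\equiv 1$. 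To see $\{U_n:n\in\omega\}\in\mO_{\mB}$, fix $B\in\mB$; since $\0\in\overline{\{f_n:n\in\omega\}}$ there is an $n$ with $f_n\in\langle B,\frac12\rangle[\0]$, and then no point of $B$ can lie outside $U_n$ (otherwise $f_n$ would take the value $1$ there), which forces $B\subset U_n$ and closes the equivalence.
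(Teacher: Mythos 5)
Your argument is correct: it is exactly the $\sone$-analogue of the paper's proof of Theorem~\ref{Scheepers1}, using the same translation Lemma (parts (a) and (b)), the same handling of the trivial coverings, and the same tail argument based on the observation that a nontrivial $\mB$-covering remains a $\mB$-covering after deleting finitely many members. The paper itself only cites Ko\v cinac for this statement (and gestures at the bornological generalization in a footnote) rather than proving it, so your proof supplies precisely the adaptation the authors have in mind.
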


Also, in \cite{Aurichi}, Aurichi and Bella had obtained the following:

\begin{theorem}[Aurichi and Bella, \cite{Aurichi}, Corollary 2.4 and Theorem 2.5]\label{Aurichi} Let $X$ be a Tychonoff space. If player II has a winning strategy in $\gone(\Omega_{x},\Omega_{x})$ played on $X$, then $\aleph_0\in Sp(x,X)$. Also, if $\aleph_0\in Sp(x,X)$, then $X$ satisfies $\sone(\Omega_{x},\Omega_{x})$.
\end{theorem}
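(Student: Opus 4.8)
The plan is to treat the two implications separately, using Arhangel'skii's internal characterization (Theorem~\ref{ArhanSp}) to translate statements about $Sp(x,X)$ into statements about closures in products. For the second implication, assume $\aleph_0\in Sp(x,X)$ and let $(A_n)_{n\in\w}$ be a sequence in $\Omega_x$; I want to select $a_n\in A_n$ with $\{a_n:n\in\w\}\in\Omega_x$. First I would take $Y=\w+1$ with its order topology and $y=\w$, a countable (hence countably tight) compact space, and set $D=\bigcup_{n\in\w}(A_n\times\{n\})\subset X\times Y$. A routine check shows $(x,\w)\in\overline{D}$: any basic neighbourhood $U\times(\{m:m\geq N\}\cup\{\w\})$ meets $A_n\times\{n\}$ for every $n\geq N$, because $x\in\overline{A_n}$. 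Since $\aleph_0\in Sp(x,X)$ and $t(Y)\leq\aleph_0$, we have $t((x,\w),X\times Y)\leq\aleph_0$, so there is a countable $C\subset D$ with $(x,\w)\in\overline{C}$. Writing $C=\{(b_k,n_k):k\in\w\}$ with $b_k\in A_{n_k}$, the set $B=\{b_k:k\in\w\}$ satisfies $x\in\overline{B}$ and $x\notin B$, i.e.\ $B\in\Omega_x$; choosing $a_n=b_k$ whenever $n=n_k$ and $a_n\in A_n$ arbitrarily otherwise produces a selector with $B\subset\{a_n:n\in\w\}$, whence $\{a_n:n\in\w\}\in\Omega_x$. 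This settles $\sone(\Omega_x,\Omega_x)$.

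For the first implication, let $\sigma$ be a winning strategy for II in $\gone(\Omega_x,\Omega_x)$. By Theorem~\ref{ArhanSp} it suffices to verify $t((x,y),X\times Y)\leq\aleph_0$ for every $Y$ with $t(Y)\leq\aleph_0$, every $y\in Y$ and every $D\subset X\times Y$ with $(x,y)\in\overline{D}$. After discarding the case in which $(x,y)\in\overline{D\cap(\{x\}\times Y)}$, which is handled directly by the countable tightness of $Y$, I may assume there is a neighbourhood of $(x,y)$ in which $D$ avoids the fibre $\{x\}\times Y$, so that for each neighbourhood $V$ of $y$ the trace $A_V=\{a\in X\setminus\{x\}:\exists b\in V,(a,b)\in D\}$ belongs to $\Omega_x$. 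The idea is then to feed the sets $A_V$ to player I and let $\sigma$ produce the desired countable subset: a response $a=\sigma(A_{V_0},\dots,A_{V_n})\in A_{V_n}$ comes with a witness $(a,b)\in D$ having $b\in V_n$, and along every branch the winning condition forces the $X$-coordinates of the responses to cluster at $x$.

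The heart of the argument, and the step I expect to be the main obstacle, is to organise these plays into a single countable family whose witnesses cluster at $(x,y)$ jointly rather than in each coordinate separately. Since $y$ need not have countable character, I cannot simply run one decreasing sequence of neighbourhoods of $y$; instead I would build a countable tree of positions, using $t(Y)\leq\aleph_0$ at each node to replace the full neighbourhood filter of $y$ by a countable set of test neighbourhoods (obtained from a countable subset of the relevant projection of $D$ that still clusters at $y$), and using $\sigma$ to choose, adaptively, which neighbourhood to shrink into next. Letting $C$ be the countable set of all witnesses $(a,b)$ generated along this tree, the final task is to verify $(x,y)\in\overline{C}$: given a basic neighbourhood $U\times V$ of $(x,y)$, one locates a branch whose test neighbourhoods cofinally enter $V$ and whose $\sigma$-responses enter $U$ — the winning condition guaranteeing the latter — so that some witness lands in $U\times V$. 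The delicate point is precisely this synchronisation of the two clustering requirements, which is what makes the strategy, as opposed to the mere selection principle $\sone(\Omega_x,\Omega_x)$, necessary for the stronger conclusion.
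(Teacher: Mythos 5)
Both halves of your proposal have genuine gaps; note also that the paper itself offers no proof of this statement (it is quoted from Aurichi--Bella), so the proposal has to stand on its own. In the second implication the final step fails: several $k$ may share the same index $n_k$, so after choosing one $a_n=b_k$ per $n$ you only retain a subset of $B$, and $\{a_n:n\in\omega\}\in\Omega_x$ does not follow. This is not a repairable bookkeeping issue, because the hypothesis you actually invoke, $t\bigl((x,\omega),X\times(\omega+1)\bigr)\leq\aleph_0$, is already a consequence of plain countable tightness of $X$ at $x$: for each $k$ project $D\cap\bigl(X\times(\{m:m\geq k\}\cup\{\omega\})\bigr)$ to $X$, extract a countable subset clustering at $x$, and lift witnesses back to $D$; the union of these lifts is a countable subset of $D$ whose closure contains $(x,\omega)$. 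Since the sequential fan $S_\omega$ is Fr\'echet--Urysohn (hence countably tight) at its apex while its spokes form a sequence in $\Omega_x$ admitting no selector in $\Omega_x$, no argument using only the factor $\omega+1$ can yield $\sone(\Omega_x,\Omega_x)$. You need either a genuinely richer test space $Y$ encoding all possible selections, or the internal characterization of Theorem~\ref{ArhanSp}: assume the selection principle fails for some sequence $(A_n)$ and manufacture from it an $\aleph_0$-singular family at $x$, contradicting $\aleph_0\in Sp(x,X)$.

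The first implication is a plan rather than a proof. The reduction to the sets $A_V\in\Omega_x$ is sound, but the step you defer --- organising a countable tree of positions so that the $\sigma$-responses cluster at $x$ \emph{simultaneously} with the second coordinates clustering at $y$ --- is precisely the mathematical content of the theorem, and it is not clear that the winning condition of $\sigma$ (which only constrains the full set of responses along one infinite branch) can be made to interact with countable tightness of $Y$ in the way you describe; in particular a single branch gives one set in $\Omega_x$, whereas your verification of $(x,y)\in\overline{C}$ needs, for each pair $U\times V$, a branch tailored to $V$ on which some response already lies in $U$. Here too the cleaner route is through Theorem~\ref{ArhanSp}: it suffices to show that if $X$ were $\aleph_0$-singular at $x$, then player I could use the singular family (feeding II sets built from condition $(c)$ and defeating the resulting countable collection of responses via condition $(d)$) to beat any strategy of II, which replaces the tree-of-neighbourhoods construction by a single application of $(d)$. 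As written, neither implication is established.
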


By changing $X$ for $C_k(X)$ on the above theorem, we obtain some interesting implications by using the results proved so far, that we summarize on the next diagram. Note that each arrow has the number of the results from which the implication follows, also, we write II $\uparrow \gone(\mA,\mC)(Y)$ to mean that player II has a winning strategy in $\gone(\mA,\mC)$ played on the space $Y$.

\small{
\begin{center}

\begin{tikzpicture}\label{diagram}

\matrix(m)[matrix of nodes, row sep=15mm, column sep=10mm,
  jump/.style={text width=20mm,anchor=center},
  txt/.style={anchor=center},
]
{
II $\uparrow\gone(\Omega_{\0}, \Omega_{\0})$ ($C_k(X)$) & 
\begin{tabular}{c}
$C_k(X)$ is productively\\
 countably tight  
\end{tabular}
 &
$\sone(\Omega_\0,\Omega_\0)(C_k(X))$\\
II $\uparrow\gone(\mK,\mK)(X)$ &
\begin{tabular}{c}
$X$ is strongly Alster
\end{tabular} &
$\sone(\mK,\mK)(X)$\\
&\begin{tabular}{c}
$X$ is Alster
\end{tabular}&\\
 &\begin{tabular}{c}
$X$ is productively Lindel\"of
\end{tabular} &\\
};

\draw[<->] (m-2-2) to node[auto]{\ref{res2}} (m-1-2);
\draw[<->] (m-2-1) to node[auto]{\ref{res1}} (m-1-1);
\draw[<->] (m-2-3) to node[auto]{\ref{Kocinac}} (m-1-3);
\draw[->] (m-1-1) to node[auto]{\ref{Aurichi}} (m-1-2);
\draw[->] (m-1-2) to node[auto]{\ref{Aurichi}} (m-1-3);
\draw[->] (m-2-2) to (m-3-2);
\draw[->] (m-3-2) to node[auto]{\cite{Alster}} (m-4-2);
\draw[->] (m-2-1) to node[auto]{\ref{res1}+\ref{res2}+\ref{Aurichi}} (m-2-2);
\draw[->] (m-2-2) to node[auto]{\ref{res2}+\ref{Kocinac}+\ref{Aurichi}} (m-2-3);

\end{tikzpicture}
\end{center}}

\normalsize
Since any bornology $\mB$ with compact base is a subset of $\frak{K}$, it follows that any $K$-covering for $X$ is also a $\mB$-covering. Thus, Corollary \ref{ckpl} is a particular case of the following

\begin{corollary}\label{lastcor}
For a Tychonoff space $X$, if there exists a bornology $\mB$ in $X$ with a compact base such that $C_\mB(X)$ is productively countably tight, then $X$ is an Alster space, and hence $X$ is productively Lindel\"of.
\end{corollary}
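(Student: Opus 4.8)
The plan is to read this off the $\aleph_0$-instance of Theorem \ref{Uspenskii1}, that is, from the characterization that $C_{\mB}(X)$ is productively countably tight exactly when every $\mB$-covering for $X$ made by $G_\delta$-sets admits a countable $\mB$-subcovering. So I would fix a bornology $\mB$ with compact base for which $C_{\mB}(X)$ is productively countably tight, take an \emph{arbitrary} $K$-covering $\mathcal{G}$ for $X$ consisting of $G_\delta$-sets, and aim to extract from $\mathcal{G}$ a countable subfamily that still covers $X$ in the ordinary sense required by the Alster property.

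The first step is to invoke the observation recorded just before the statement: since $\mB$ has a compact base, $\mB\subseteq\frak{K}$, and consequently every $K$-covering is in particular a $\mB$-covering (absorbing every $K\in\frak{K}$ certainly absorbs every $B\in\mB\subseteq\frak{K}$). Hence $\mathcal{G}$ is a $\mB$-covering for $X$ made by $G_\delta$-sets, and the characterization furnishes a countable $\mathcal{G}'\subseteq\mathcal{G}$ that is a $\mB$-subcovering.

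The one place where a short argument is needed — and what I expect to be the only real, if minor, obstacle — is that a $\mB$-subcovering is a priori required merely to absorb each member of $\mB$ into a single element of $\mathcal{G}'$, which is formally weaker than covering $X$ pointwise. To close this gap I would appeal to the bornology axioms: $\mB$ is an ideal that covers $X$, so for each $x\in X$ there is some $B\in\mB$ with $x\in B$, and downward closure of the ideal gives $\{x\}\in\mB$. Applying the $\mB$-covering property of $\mathcal{G}'$ to the singleton $\{x\}$ produces a $G\in\mathcal{G}'$ with $x\in G$; as $x$ was arbitrary, $\mathcal{G}'$ is a genuine countable cover of $X$. This shows $X$ is Alster, and productive Lindel\"ofness then follows at once from Alster's theorem in \cite{Alster}. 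Finally, since $C_{\frak{K}}(X)=C_k(X)$, the case $\mB=\frak{K}$ recovers Corollary \ref{ckpl}, so the statement is indeed the announced generalization.
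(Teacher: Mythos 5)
Your argument is correct and is essentially the paper's own (the paper leaves it implicit in the remark preceding the corollary): apply the $\kappa=\aleph_0$ case of Theorem \ref{Uspenskii1}, note that $\mB\subseteq\frak{K}$ makes every $K$-covering a $\mB$-covering, and extract a countable $\mB$-subcovering. Your extra step checking that a countable $\mB$-subcovering is an honest cover of $X$ (via singletons lying in the bornology) is a detail the paper glosses over, and it is handled correctly.
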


\end{document}